\documentclass[12pt, a4paper, leqno]{amsart}
\usepackage[utf8]{inputenc}
\usepackage{amsmath}
\usepackage{amsfonts}
\usepackage{amssymb}
\usepackage{times}
\usepackage[T1]{fontenc} 
\usepackage{url} 
\usepackage{color,esint}
\usepackage[dvipsnames]{xcolor}
\usepackage[colorlinks, allcolors=RedViolet,pdfstartview=,pdfpagemode=UseNone]{hyperref} 
\usepackage{graphicx} 
\usepackage{enumitem}
\usepackage{tabularx}
 \usepackage{mathtools}
\usepackage{tikz}
\usepackage{mathrsfs}
\usetikzlibrary{arrows}

\setlength{\oddsidemargin}{-0.25cm}
\setlength{\evensidemargin}{-0.25cm}
\setlength{\textwidth}{16.5cm}
\setlength{\topmargin}{-0.2cm}
\setlength{\textheight}{23.6cm} 
\setlength{\marginparwidth}{2cm}
\let\oldmarginpar\marginpar
\renewcommand\marginpar[1]{\-\oldmarginpar[\raggedleft\footnotesize #1]%
{\raggedright\footnotesize #1}}

\usepackage{amsthm}
\theoremstyle{plain}

\newtheorem{lem}[equation]{Lemma}
\newtheorem{prop}[equation]{Proposition}

\theoremstyle{definition}
\newtheorem{defn}[equation]{Definition}

\newtheorem{example}[equation]{Example}

\theoremstyle{remark}

\numberwithin{equation}{section}

\newcommand{\R}{\mathbb{R}}

\newcommand{\Rn}{\mathbb{R}^n}

\renewcommand{\phi}{\varphi}
\renewcommand{\epsilon}{\varepsilon}
\def\le{\leqslant}
\def\leq{\leqslant}
\def\ge{\geqslant}

\def\phi{\varphi}
\def\rho{\varrho}
\def\vartheta{\theta}

\newcommand{\Phiw}{\Phi_{\text{\rm w}}}

\newcommand{\Phis}{\Phi_{\text{\rm s}}}

\def\supp{\operatorname{supp}}
\def\esssup{\operatornamewithlimits{ess\,sup}}

\def\dist{\qopname\relax o{dist}}

\def\essinf{\operatornamewithlimits{ess\,inf}}

\def\supp{\operatornamewithlimits{supp}}

\newcommand{\ainc}[1]{\hyperref[def:aInc]{{\normalfont(aInc){\ensuremath{_{#1}}}}}}
\newcommand{\adec}[1]{\hyperref[def:aDec]{{\normalfont(aDec){\ensuremath{_{#1}}}}}}
\newcommand{\inc}[1]{\hyperref[def:inc]{{\normalfont(Inc){\ensuremath{_{#1}}}}}}
\newcommand{\dec}[1]{\hyperref[def:dec]{{\normalfont(Dec){\ensuremath{_{#1}}}}}}
\newcommand{\azero}{\hyperref[def:A0]{{\normalfont(A0)}}}
\newcommand{\aone}{\hyperref[def:A1]{{\normalfont(A1)}}}
\newcommand{\atwo}{\hyperref[def:A2]{{\normalfont(A2)}}}

\newcommand{\aonen}[1]{\hyperref[aonen]{{\normalfont(A1-\ensuremath{#1})}}}
\newcommand{\VAn}[1]{\hyperref[VA1n]{{\normalfont(VA1-\ensuremath{#1})}}}
\newcommand{\wVAn}[1]{\hyperref[wVA1n]{{\normalfont(wVA1-\ensuremath{#1})}}}
\newcommand{\condMn}[1]{\hyperref[condM]{{\normalfont(M-\ensuremath{#1})}}}

\date{\today}

\begin{document}
\hypersetup{pageanchor=true}

\title[A revised condition for generalized Orlicz spaces on unbounded 
domains]{A revised condition for harmonic analysis in generalized Orlicz spaces on unbounded 
domains}

\author{Petteri Harjulehto}
\address{Petteri Harjulehto,
Department of Mathematics and Statistics,
FI-00014 University of Hel\-sin\-ki, Finland}
\email{\texttt{petteri.harjulehto@helsinki.fi}}

\author{Peter Hästö}
\address{Peter Hästö, Department of Mathematics and Statistics,
FI-20014 University of Turku, Finland}
\email{\texttt{peter.hasto@utu.fi}}

\author{Artur S{\l}abuszewski}
\address{Artur S{\l}abuszewski, Department of Mathematics and Information Sciences, Warsaw University of Technology, Poland}
\email{\texttt{artur.slabuszewski.dokt@pw.edu.pl}}

\subjclass[2020]{46E30 (46E35)}
\keywords{Generalized Orlicz space, Musielak--Orlicz spaces, density of smooth functions,
nonstandard growth, variable exponent, double phase}

\begin{abstract}
Conditions for harmonic analysis in generalized Orlicz spaces 
have been studied over the past decade. One approach involves the generalized inverse of 
so-called weak $\Phi$-functions. It featured prominently in the monograph 
\emph{Orlicz Spaces and Generalized Orlicz Spaces}
[P.\ Harjulehto and P.\ Hästö, Lecture Notes in Mathematics, vol.\ 2236, Springer, Cham, 2019].
While generally successful, the inverse function formulation of 
the decay condition (A2) in the monograph contains a flaw, which we explain and correct in this note. 
We also present some new results related to the conditions, 
including a more general result for the density of smooth functions. 
\end{abstract}
 
\maketitle


\section{Introduction}

Generalized Orlicz spaces, or Musielak--Orlicz spaces, are useful in the analysis of a variety 
of models, including the variable exponent and double phase cases that have been fashionable 
lately. Harmonic analysis in this 
context has been studied over the past 10 years. 
In the 2019 monograph \cite{HarH_book}, conditions for harmonic analysis in generalized Orlicz spaces 
inspired by \cite{BarCM18, DieHHR11, Has15, MaeMOS13a}
were developed and refined. One idea was to formulate the assumptions  
in terms of the generalized inverse of so-called weak $\Phi$-functions 
that are not necessarily bijections (cf.\ \cite{BenHHK21, ChlGSW21, HadSV_pp, Has23, HasO22, HurOS23, SavSY23} 
for variants and developments of the conditions). The rationale for this is to obtain better behavior 
with respect to taking limits and other modifications. This approach has proven its power in 
numerous papers, but it turns out that the inverse function formulation of 
the decay condition \atwo{} for unbounded domains in \cite{HarH_book} contains a flaw, 
which we explain and correct in this note. 
Additionally, related new results are contained in Section~\ref{sect:other}. In particular, we show 
in Proposition~\ref{prop:equivalent} that \atwo{} implies \azero{}. 

\subsection*{The mistake in the book}

We start with two versions of the condition \atwo{} from the monograph \cite{HarH_book}. 
For every $\sigma>0$ there should exist $\beta\in(0,1]$ and $h\in L^1(\Omega) \cap L^\infty (\Omega)$ 
such that the following hold for a.e.\ $x,y\in \Omega$:
\begin{equation} \label{eq:old-A2}
\beta \phi^{-1}(x,\tau) 
\le 
\phi^{-1}(y,\tau)
\quad\text{when } \tau\in [h(x)+h(y), \sigma]
\end{equation}
or
\begin{equation}\label{eq:phi-A2}
\phi(x,\beta t) 
\le 
\phi(y,t) + h(x) + h(y)
\quad\text{when } \phi(y,t)\in [0, \sigma].
\end{equation}
In these (and later) conditions we assume without loss of generality that $h\ge 0$. 
Condition \eqref{eq:phi-A2} is called \atwo{} in \cite{Has15} whereas \eqref{eq:old-A2} carries 
that name in \cite{HarH_book}. The idea was that this does not lead to confusion since the 
conditions were supposed to be equivalent. 
Indeed, in \cite[Lemma~4.2.5]{HarH_book} they are claimed to be 
equivalent, but this is not the case as the next example shows.

\begin{example}\label{eg:no-A2}
Let $B(0,1) \subseteq \mathbb{R}^{n}$ be the unit ball centered at the origin and $\Omega:= B(0,1) \setminus \{0\}$. We define $\phi:\Omega\times [0,\infty)\to [0,\infty]$ by
\[
\phi(x,t) = \frac{t^{2}}{|x|}.
\]
Since $\Omega$ is bounded, $h:= \sigma\chi_\Omega \in L^1(\Omega)$. Thus $[h(x)+h(y), \sigma]$ is empty for all $x, y \in \Omega$, and so \eqref{eq:old-A2} holds.
It remains to show that $\phi$ does not satisfy \eqref{eq:phi-A2}.

Suppose that the inequality from \eqref{eq:phi-A2} holds for some $\sigma>0$, $\beta\in (0,1]$ and $h\in L^1(\Omega) \cap L^\infty (\Omega)$. 
Fix a non-exceptional point $y\in \Omega$ and $t\in (0,\infty)$ such that $\phi(y,t) \in [0,\sigma]$, the 
inequality \eqref{eq:phi-A2} holds for a.e $x\in \Omega$ and $|h(y)| \le\|h\|_\infty$. Then 
\[
\frac{\beta^{2}t^{2}}{|x|} \le \frac{t^{2}}{|y|} + 2 \|h\|_\infty
\]
for almost every $x\in \Omega$. When $x\to 0$, this gives a contradiction. Thus \eqref{eq:phi-A2} does not hold.
\end{example} 

The problem with the proof of \cite[Lemma~4.2.5]{HarH_book}, which claimed that 
\eqref{eq:old-A2} and \eqref{eq:phi-A2} are equivalent, is that
the set $[h(x)+h(y), \sigma]$ can be empty and hence \eqref{eq:old-A2} cannot necessary be 
applied for $\tau'$ in the proof. 
Here is an excerpt from the proof of \cite[Lemma~4.2.5]{HarH_book} with 
the problematic part high-lighted.

\begin{quote}
Assume \eqref{eq:old-A2} and denote $\tau:=\phi^{-1}(x,t)$. By \cite[Lemma~2.3.3]{HarH_book}, 
$\phi(x,\tau)=t$, and it follows from \eqref{eq:old-A2} that 
\[
\beta_2 \tau 
\le 
\phi^{-1}(y,\phi(x,\tau))
\]
for almost every $x,y\in \Omega$ whenever $\phi(x,\tau)\in [h(x)+h(y), \sigma]$. 
Then we apply $\phi(y,\cdot)$ to both sides and use \cite[Lemma~2.3.3]{HarH_book} to 
obtain that 
\[
\phi(y,\beta_2 \tau)
\le 
\phi(x,\tau)
\]
for the same range. If, on the other hand, $\phi(x,\tau)\in [0, h(x)+h(y))$, then 
we can find $\tau'>\tau$ such that $\phi(x,\tau')=h(x)+h(y)$ since $\phi\in \Phis(\Omega)$. 
As $\phi(x,\cdot)$ is increasing, we obtain by \textbf{the previous case applied for $\tau'$ that} 
\[
\phi(y,\beta_2 \tau)
\le 
\phi(y,\beta_2 \tau')
\le 
\phi(x,\tau')
=
h(x)+h(y).
\]
\end{quote}

In the next section we solve the problem by introducing a corrected \atwo{}-condition 
and re-prove some lemmas. First we recall some notation.

\subsection*{Notation and terminology}

For functions $\phi,\psi:\Omega \times [0,\infty)\to[0,\infty]$, we write 
$\phi\approx \psi$ or $\phi\simeq \psi$ if there exists $c\ge 1$ such that 
\[
\tfrac1c \phi(x,t)\le \psi(x,t)\le c \phi(x,t)
\quad\text{or}\quad
\phi(x,\tfrac tc)\le \psi(x,t)\le \phi(x,ct),
\] 
respectively, for a.e.\ $x \in \Omega$ and every $t\in [0,\infty)$.
A function $f:[0,\infty)\to [0,\infty]$ is \textit{almost increasing} (more precisely, $a$-almost increasing) if there
exists $a \ge 1$ such that $f(s) \le a f(t)$ for all $s \le t$.
\textit{Almost decreasing} is defined analogously.
By \textit{increasing} we mean that the inequality holds for $a=1$ 
(some call this non-decreasing), similarly for \textit{decreasing}. 

\begin{defn}
\label{def2-1}
We say that $\phi: \Omega\times [0, \infty) \to [0, \infty]$ is a 
\textit{weak $\Phi$-function}, and write $\phi \in \Phiw(\Omega)$, if 
the following conditions hold for a.e.\ $x \in \Omega$:
\begin{itemize}
\item 
$\phi(\cdot, |f|)$ is measurable for every measurable $f:\Omega\to \R$. 
\item
$t \mapsto \phi(x, t)$ is increasing. 
\item 
$\displaystyle \phi(x, 0) = \lim_{t \to 0^+} \phi(x,t) =0$ and $\displaystyle \lim_{t \to \infty}\phi(x,t)=\infty$.
\item 
$t \mapsto \frac{\phi(x, t)}t$ is $a$-almost increasing on $(0,\infty)$ with 
constant $a\ge 1$ independent of $x$.
\end{itemize}
If $\phi(x,\cdot)$ is additionally a convex and in $C([0,\infty); [0,\infty])$ 
for a.e.\ $x\in\Omega$, then $\phi$ is a 
\textit{strong $\Phi$-function} and we write $\phi \in \Phis(\Omega)$. 
\end{defn}

The \textit{left-inverse} of $\phi\in \Phiw(\Omega)$ is defined as 
\[
\phi^{-1}(x,\tau) 
:= 
\inf\{t\ge 0 \mid \phi(x,t)\ge \tau\}
\]
and the \textit{conjugate} is defined as 
\[
\phi^*(x,t):=\sup \{st - \phi(x,s) \mid s\ge 0\}. 
\]


\section{The corrected decay condition (A2)}\label{sect:A2}

We introduce a new, slightly modified \atwo{}-condition, that solves the problem explained in the previous section.
This yields changes to Lemmas 4.2.2--4.2.5 in the book \cite{HarH_book} (cf.\ Table~\ref{table:correspondence}).
No modification is needed in the book after Lemma~4.2.5 since \atwo{} is mostly used as \eqref{eq:phi-A2} 
via Lemma~4.2.5, and in other places changing to the new \atwo{} is trivial. Also the extra assumption in Lemma~\ref{lem:A2-bounded-set} compared to \cite[Lemma~4.2.3]{HarH_book} 
does not matter since when Lemma~4.2.3 is used in the book also \azero{} and \aone{} are assumed. 

Apart from the elegance of defining all conditions, \azero{}, \aone{} and \atwo{}, in 
terms of inverse functions, a motivation is that some features are easier to handle in this 
formulation, in particular invariance and conjugate functions. Since the invariance of 
\eqref{eq:phi-A2} under conjugation was shown in \cite[Lemma~4.2.4]{HarH_book} using \eqref{eq:old-A2}, that proof 
breaks down. Therefore, we need to establish $\phi^*$ satisfies \eqref{eq:phi-A2} if $\phi$ does 
using our new \atwo{}.

\begin{table}[ht!]
\caption {Correspondence between results in the book and this note. }\label{table:correspondence}
\begin{tabular}{lll}
Book & This note & Change\\ 
\hline
Definition 4.2.1 & Definition~\ref{def:As}(A2) & definition modified\\ 
Lemma 4.2.2 & Lemma~\ref{lem:A2-invariant} & old proof works\\
Lemma 4.2.3 & Lemma~\ref{lem:A2-bounded-set} & stronger assumption and new proof\\ 
Lemma 4.2.4 & Lemma~\ref{lem:phi*_A2} & auxiliary lemma and old proof\\ 
Lemma 4.2.5 & Lemma~\ref{lem:A2original} &new proof\\
\end{tabular}
\end{table}

Our corrected \atwo{}-condition with the inverse $\phi^{-1}$ is included in the following definition. 
Compared to \eqref{eq:old-A2}, it is more directly related to \eqref{eq:phi-A2}, but not as elegant.

\begin{defn}\label{def:As}
For $\phi:\Omega\times [0,\infty)\to [0,\infty]$ and $p,q>0$ we define some conditions.
\begin{itemize}[leftmargin=4em]
\item[(aInc)$_p$] \label{def:aInc} 
There exists $a_p\ge 1$ such that $t \mapsto \frac{\phi(x,t)}{t^{p}}$ is $a_p$-almost 
increasing in $(0,\infty)$ for a.e.\ $x\in\Omega$.
\item[(aDec)$_q$] \label{def:aDec}
There exists $a_q\ge 1$ such that $t \mapsto \frac{\phi(x,t)}{t^{q}}$ is $a_q$-almost 
decreasing in $(0,\infty)$ for a.e.\ $x\in\Omega$.
\item[(A0)]\label{def:A0}
There exists $\beta \in(0, 1]$ such that $\beta \le \phi^{-1}(x,1) \le \frac1\beta$ 
for a.e.\ $x \in \Omega$. 
\item[(A1)]\label{def:A1}
There exists $\beta \in (0,1]$ such that, for a.e.\ $x,y\in \Omega\cap B$,
\[ 
\beta \phi^{-1}(x, \tau) \le \phi^{-1}(y,\tau) \quad\text{when}\quad \tau \in \bigg[1, \frac{1}{|B|}\bigg].
\]
\item[(A2)]\label{def:A2}
For every $\sigma>0$ there exist $\beta\in(0,1]$ and $h\in L^1(\Omega) \cap L^\infty (\Omega)$, $h\ge 0$,
such that, for a.e.\ $x,y\in \Omega$, 
\[
\beta \phi^{-1}(x,\tau) 
\le 
\phi^{-1}(y,\tau+h(x)+h(y))
\quad\text{when}\quad \tau\in [0, \sigma].
\]
\end{itemize} 
\end{defn}

Since $\phi\simeq\psi$ if and only if $\phi^{-1}\approx \psi^{-1}$ by
Theorem~2.3.6 of \cite{HarH_book}, we see that $\phi$ satisfies \atwo{} if and only if $\psi$ does. 
This proves the next result. 

\begin{lem}\label{lem:A2-invariant}
\atwo{} is invariant under equivalence of weak $\Phi$-functions.
\end{lem}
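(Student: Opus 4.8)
The plan is to reduce the statement to the characterization of equivalence of weak $\Phi$-functions in terms of their left-inverses. Recall that two weak $\Phi$-functions are equivalent precisely when $\phi\simeq\psi$, and that by Theorem~2.3.6 of \cite{HarH_book} this is the same as $\phi^{-1}\approx\psi^{-1}$. Hence it suffices to prove the one-directional statement: if $\phi$ satisfies \atwo{} and $\phi^{-1}\approx\psi^{-1}$, then $\psi$ satisfies \atwo{}. The converse implication then comes for free, since $\approx$ is a symmetric relation (just interchange the roles of $\phi$ and $\psi$).

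For the one-directional statement, fix $c\ge 1$ with $\tfrac1c\phi^{-1}(x,\tau)\le\psi^{-1}(x,\tau)\le c\,\phi^{-1}(x,\tau)$ for a.e.\ $x\in\Omega$ and all $\tau\ge0$. Given $\sigma>0$, let $\beta\in(0,1]$ and $h\in L^1(\Omega)\cap L^\infty(\Omega)$, $h\ge0$, be the data supplied by \atwo{} for $\phi$. For a.e.\ $x,y\in\Omega$ and every $\tau\in[0,\sigma]$ I would then chain three inequalities,
\[
\psi^{-1}(x,\tau)\le c\,\phi^{-1}(x,\tau)\le \frac c\beta\,\phi^{-1}(y,\tau+h(x)+h(y))\le \frac{c^2}\beta\,\psi^{-1}(y,\tau+h(x)+h(y)),
\]
applying first $\psi^{-1}\le c\,\phi^{-1}$, then \atwo{} for $\phi$, and finally $\phi^{-1}\le c\,\psi^{-1}$. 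Setting $\beta':=\beta/c^2\in(0,1]$ (note $\beta\le1\le c^2$) and keeping the same $h$ and $\sigma$, this is exactly the inequality required in \atwo{} for $\psi$.

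I do not expect a genuine obstacle here; the argument is essentially bookkeeping. The only points to watch are that the comparison constant $c$ enters twice — once on each side of the application of \atwo{} for $\phi$ — so the resulting constant is $\beta/c^2$ rather than $\beta/c$; that $\beta/c^2$ still lies in $(0,1]$; and that neither $h$ nor the admissible range $[0,\sigma]$ for $\tau$ changes in the process, so no clause of the definition of \atwo{} is touched. The only external input is Theorem~2.3.6 of \cite{HarH_book}, which is precisely the equivalence $\phi\simeq\psi\iff\phi^{-1}\approx\psi^{-1}$ for weak $\Phi$-functions.
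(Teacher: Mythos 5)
Your argument is correct and is exactly the paper's approach: the paper's proof consists of the single observation that $\phi\simeq\psi$ iff $\phi^{-1}\approx\psi^{-1}$ (Theorem~2.3.6 of the monograph), from which the invariance of \atwo{} is immediate. You have merely written out the bookkeeping (the chained inequalities and the new constant $\beta/c^2$) that the paper leaves implicit.
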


Like \eqref{eq:old-A2}, the new \atwo{}-condition only concerns the behavior of $\phi$ at infinity, as 
the following result shows. The next lemma contains an extra assumption, \azero{}, compared to the book. The fact that \eqref{eq:old-A2} holds in all bounded domains even when $\phi$ is not 
locally bounded (cf.\ Example~\ref{eg:no-A2}) suggests that the old condition is not reasonable.

\begin{lem}\label{lem:A2-bounded-set}
Let $\Omega \subset \Rn$ be bounded.
If $\phi \in \Phiw (\Omega)$ satisfies \azero{}, then it satisfies \atwo{}. 
\end{lem}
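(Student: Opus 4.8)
The plan is to reduce \atwo{} to an essentially trivial pointwise inequality by exploiting that, on a bounded domain, constant functions belong to $L^1(\Omega)\cap L^\infty(\Omega)$. Concretely, I fix $\sigma>0$ and aim to produce a constant $C=C(\sigma)\ge1$ with $\phi^{-1}(x,\tau)\le C$ for a.e.\ $x\in\Omega$ and all $\tau\in[0,\sigma]$. Granting this, I take $h\equiv\tfrac12$ (which is nonnegative and in $L^1(\Omega)\cap L^\infty(\Omega)$ precisely because $\Omega$ is bounded) and $\beta:=\beta_0/C$, where $\beta_0\in(0,1]$ is the constant from \azero{}. Then for $\tau\in[0,\sigma]$ we have $\tau+h(x)+h(y)=\tau+1\ge1$, so by monotonicity of $\phi^{-1}(y,\cdot)$ in its second argument and the lower bound in \azero{},
\[
\phi^{-1}(y,\tau+h(x)+h(y))\ \ge\ \phi^{-1}(y,1)\ \ge\ \beta_0\ =\ \beta C\ \ge\ \beta\,\phi^{-1}(x,\tau),
\]
which is exactly \atwo{}. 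Since $C\ge\phi^{-1}(x,1)\ge\beta_0$, also $\beta\in(0,1]$, so all requirements are met.

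The real content is therefore the uniform bound $C$. For this I would combine \azero{} with the last clause of the definition of $\Phiw(\Omega)$, namely that $t\mapsto\phi(x,t)/t$ is $a$-almost increasing with $a$ independent of $x$. From the upper bound $\phi^{-1}(x,1)\le1/\beta_0$ in \azero{} and the definition of the left-inverse, $\phi(x,t)\ge1$ for every $t>1/\beta_0$; in particular $\phi(x,2/\beta_0)\ge1$. Feeding this into the almost-increasing property gives $\phi(x,t)\ge\frac{\beta_0}{2a}\,t$ for all $t\ge2/\beta_0$ and a.e.\ $x\in\Omega$. Setting $C:=\max\{2/\beta_0,\,2a\sigma/\beta_0\}$, one then checks $\phi(x,C)\ge\sigma$ for a.e.\ $x$, and hence $\phi^{-1}(x,\tau)\le\phi^{-1}(x,\sigma)\le C$ for all $\tau\in[0,\sigma]$, again by monotonicity of $\phi^{-1}$.

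I expect the only delicate point to be the careful handling of the left-inverse: since weak $\Phi$-functions may have jumps and flat pieces, the argument must proceed from $\phi^{-1}(x,\tau)=\inf\{t\ge0:\phi(x,t)\ge\tau\}$ and the monotonicity of $\phi(x,\cdot)$, rather than from identities such as $\phi(x,\phi^{-1}(x,\tau))=\tau$; in particular the implication "$\phi^{-1}(x,1)\le1/\beta_0$ $\Rightarrow$ $\phi(x,t)\ge1$ for $t>1/\beta_0$" should be spelled out explicitly, and everything else is routine bookkeeping. It is also worth noting in the proof that this is exactly where \azero{} is genuinely used, in contrast with \eqref{eq:old-A2}: without it, as Example~\ref{eg:no-A2} shows, $\phi^{-1}(x,\tau)$ may be unbounded as $x$ ranges over $\Omega$, and no constant shift by $h$ can compensate.
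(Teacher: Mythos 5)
Your proof is correct and follows essentially the same route as the paper: a uniform upper bound $\phi^{-1}(x,\tau)\le C(\sigma)$ for $\tau\in[0,\sigma]$, a constant function $h$ (admissible only because $\Omega$ is bounded) pushing the argument above $1$, and the lower bound $\phi^{-1}(y,1)\ge\beta_0$ from \azero{}. The only difference is that where the paper obtains the upper bound by citing that \ainc{1} of $\phi$ gives \adec{1} of $\phi^{-1}$, you re-derive it directly from the definition of the left-inverse and the almost-increasing property of $\phi(x,t)/t$, which is a self-contained but equivalent argument.
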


\begin{proof}
Let $\sigma>0$. 
Since $\phi$ satisfies \ainc{1}, $\phi^{-1}$ satisfies \adec{1} by Proposition 2.3.7 of \cite{HarH_book}. 
If $\tau \in [0, 1]$, then $\phi^{-1}(x,\tau) \le \phi^{-1}(x, 1) \le \frac1{\beta}$ by \azero{}.
If $\tau \in (1, \sigma]$, then by \adec{1} and \azero{},
\[
\phi^{-1}(x,\tau) \le a \tau \phi^{-1}(x, 1) \le a \sigma \tfrac1{\beta}.
\]
Choose $h := \chi_\Omega$. Since $h$ and $\Omega$ are bounded, $h \in L^1(\Omega)\cap L^\infty(\Omega)$. 
Furthermore, 
\[
\phi^{-1}(y,\tau+h(x)+h(y)) \ge \phi^{-1}(y,1) \ge \beta
\]
by \azero{}. Combining the inequalities, we obtain the \atwo{}-inequality 
\[
\frac{\beta^2}{\max\{1,a\sigma\}} \phi^{-1}(x,\tau) \le \phi^{-1}(y,\tau+h(x)+h(y))
\quad\text{for every }\tau \in [0, \sigma]. \qedhere
\]
\end{proof}

The inverse is also useful when dealing with the conjugate function. 
The following proof is as in \cite{HarH_book}, once we use an auxiliary result proved below. 

\begin{lem}\label{lem:phi*_A2}
If $\phi\in\Phiw(\Omega)$ satisfies \atwo{}, then so does $\phi^*$.
\end{lem}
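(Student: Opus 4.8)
The plan is to transfer the new \atwo{} inequality from $\phi$ to $\phi^*$ by means of a duality relation between left-inverses. The key tool is the identity relating $\phi^{-1}$ and $(\phi^*)^{-1}$: there is a constant $c\ge 1$ (absolute, or depending only on the $a$ in the definition of $\Phiw$) such that
\[
\frac{1}{c}\,\frac{\tau}{\phi^{-1}(x,\tau)} \le (\phi^*)^{-1}(x,\tau) \le c\,\frac{\tau}{\phi^{-1}(x,\tau)}
\]
for a.e.\ $x$ and all $\tau>0$; this is the inverse-function reformulation of Young's inequality and appears in \cite{HarH_book} (it is the same ingredient that powered the old proof of \cite[Lemma~4.2.4]{HarH_book}). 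In fact this estimate is exactly what the ``auxiliary result proved below'' should supply, so I would invoke it here as a black box and prove it afterwards.

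Given this, the argument is short. Fix $\sigma>0$. We want to verify \atwo{} for $\phi^*$ at level $\sigma$. Apply \atwo{} for $\phi$ with a suitably chosen level $\sigma'$ (for instance $\sigma' = c\sigma$ or something comparable, chosen so that the range of arguments appearing below stays within $[0,\sigma']$) to obtain $\beta\in(0,1]$ and $h\in L^1(\Omega)\cap L^\infty(\Omega)$, $h\ge 0$, with $\beta\phi^{-1}(x,s)\le\phi^{-1}(y,s+h(x)+h(y))$ whenever $s\in[0,\sigma']$. Now take $\tau\in[0,\sigma]$ and estimate, using the two-sided bound above and then the \atwo{} inequality for $\phi$ applied with $s=\tau$:
\[
(\phi^*)^{-1}(x,\tau) \le c\,\frac{\tau}{\phi^{-1}(x,\tau)} \le \frac{c}{\beta}\,\frac{\tau}{\phi^{-1}(y,\tau+h(x)+h(y))}
\le \frac{c}{\beta}\,\frac{\tau+h(x)+h(y)}{\phi^{-1}(y,\tau+h(x)+h(y))} \le \frac{c^2}{\beta}\,(\phi^*)^{-1}(y,\tau+h(x)+h(y)),
\]
where the last step again uses the two-sided bound. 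Thus $\frac{\beta}{c^2}(\phi^*)^{-1}(x,\tau)\le(\phi^*)^{-1}(y,\tau+h(x)+h(y))$ for $\tau\in[0,\sigma]$, which is \atwo{} for $\phi^*$ with constant $\beta/c^2$ and the same $h$. One has to be slightly careful that the argument $\tau+h(x)+h(y)$ fed to $\phi^{-1}(y,\cdot)$ lies in the range $[0,\sigma']$ where \atwo{} for $\phi$ is valid; since $h$ is bounded, choosing $\sigma' \ge \sigma + 2\|h\|_\infty$ (and noting that $h$ depends on $\sigma'$, so one fixes $\sigma'$ first, gets $h$, and the inequality $\sigma'\ge\sigma+2\|h\|_\infty$ may need $\sigma'$ to be taken large enough in terms of the resulting $\|h\|_\infty$ — or one simply replaces $h$ by $\min\{h,\sigma'\}$, shrinking $\beta$) handles this. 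An alternative that sidesteps the bookkeeping entirely is to use the $\phi$-formulation \eqref{eq:phi-A2}, which is equivalent to the new \atwo{} by Lemma~\ref{lem:A2original}, and run the classical conjugation argument on \eqref{eq:phi-A2} directly.

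The main obstacle I anticipate is not the duality estimate itself — that is standard — but the range-matching: making sure the shifted argument $\tau + h(x) + h(y)$ stays inside the interval on which the hypothesis for $\phi$ was invoked, given the circular-looking dependence of $h$ on $\sigma'$. The clean fix is the truncation $\tilde h := \min\{h,1\}$ or a rescaling of $h$, at the cost of a harmless change in $\beta$; one should also double-check that the two-sided inverse-Young estimate holds with a constant independent of $x$ (it does, depending only on the almost-increasing constant $a$ from Definition~\ref{def2-1}), so that the final \atwo{} constant for $\phi^*$ is genuinely uniform. Everything else is bookkeeping.
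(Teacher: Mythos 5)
Your central chain applies the \atwo{} hypothesis in the wrong direction. \atwo{} for $\phi$ gives $\beta\phi^{-1}(x,\tau)\le\phi^{-1}(y,\tau+h(x)+h(y))$, hence
$\tfrac{\tau}{\phi^{-1}(y,\tau+h(x)+h(y))}\le\tfrac{1}{\beta}\,\tfrac{\tau}{\phi^{-1}(x,\tau)}$,
i.e.\ an \emph{upper} bound on the quantity you are trying to bound from below. Your second inequality,
$\tfrac{\tau}{\phi^{-1}(x,\tau)}\le\tfrac{1}{\beta}\,\tfrac{\tau}{\phi^{-1}(y,\tau+h(x)+h(y))}$,
would instead require $\beta\,\phi^{-1}(y,\tau+h(x)+h(y))\le\phi^{-1}(x,\tau)$, which is not what the hypothesis provides and is false in general: already for $\phi(x,t)=t^2$ and $h\equiv\tfrac12$ it reads $\beta\sqrt{\tau+1}\le\sqrt{\tau}$, which fails as $\tau\to0^+$. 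The structural reason is that the identity $(\phi^*)^{-1}(x,\tau)\,\phi^{-1}(x,\tau)\approx\tau$ \emph{reverses} inequalities between the inverses at $x$ and at $y$; with the asymmetric arguments $\tau$ and $\tau+h(x)+h(y)$ the correctly dualized statement is
$(\phi^*)^{-1}(y,\tau+h(x)+h(y))\le \tfrac{c^2}{\beta}\,\tfrac{\tau+h(x)+h(y)}{\tau}\,(\phi^*)^{-1}(x,\tau)$,
which has the wrong shape for \atwo{} of $\phi^*$ and carries a factor that blows up as $\tau\to0$. So the real obstacle is not the range-matching for $\sigma'$ that you flagged (that part is indeed harmless bookkeeping), but the orientation of the inequality under the duality flip.

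The paper's proof avoids this precisely by first passing to the equivalent symmetric formulation \eqref{eq:max-A2} of Proposition~\ref{prop:equivalent}(3), in which both sides carry the \emph{same} argument $\tilde\tau=\max\{\tau,h(x)+h(y)\}$: multiplying $\beta\phi^{-1}(x,\tilde\tau)\le\phi^{-1}(y,\tilde\tau)$ by $(\phi^*)^{-1}(x,\tilde\tau)(\phi^*)^{-1}(y,\tilde\tau)$ and using the inverse-Young identity then merely interchanges the roles of $x$ and $y$, which is harmless since the condition is quantified over a.e.\ pairs. Your parenthetical fallback---dualizing \eqref{eq:phi-A2} directly by the classical conjugation argument---is a viable alternative route, but you do not carry it out, and the argument you do present does not close. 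To repair it you need the symmetrization step (or an equivalent device) before invoking $(\phi^*)^{-1}(x,\tau)\approx\tau/\phi^{-1}(x,\tau)$.
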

\begin{proof}
Let us write $\tilde\tau := \max\{\tau,h(x)+h(y)\}$ and use 
Proposition~\ref{prop:equivalent}(3). Thus it suffices to assume
$\beta \phi^{-1}(x,\tilde\tau) 
\le 
\phi^{-1}(y,\tilde\tau)$ and show the same inequality for $\phi^*$.
We multiply the inequality by $(\phi^*)^{-1}(x,\tilde\tau)(\phi^*)^{-1}(y,\tilde\tau)$
and use $\frac1c\tilde\tau \le (\phi^*)^{-1}(x,\tilde\tau)\phi^{-1}(x,\tilde\tau) \le c\tilde\tau$ \cite[Theorem~2.4.8]{HarH_book}
to obtain that 
\[
\tfrac\beta {c^2} \tilde\tau (\phi^*)^{-1}(y,\tilde\tau)
\le 
\tilde\tau (\phi^*)^{-1}(x,\tilde\tau).
\]
For $\tilde\tau>0$, we divide the previous inequality by $\tilde\tau$ to get \atwo{} of $\phi^*$; for 
$\tilde\tau=0$, \atwo{} is trivial, since $(\phi^*)^{-1}(x, 0)=0$. 
\end{proof}

Next we prove the equivalence of \atwo{} and \eqref{eq:phi-A2}. Note that the 
equivalence is an easy exercise if $\phi$ is a strictly increasing bijection, 
but here we consider a more general case.

\begin{lem}\label{lem:A2original}
The weak $\Phi$-function $\phi\in \Phiw(\Omega)$ satisfies \atwo{} if and only if 
it satisfies \eqref{eq:phi-A2}.
\end{lem}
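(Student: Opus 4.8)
The plan is to prove the two implications separately, using only two elementary consequences of the definition of the left-inverse as an infimum: $\phi^{-1}(x,\cdot)$ is non-decreasing, and for $u,\tau\ge 0$ one has $u<\phi^{-1}(x,\tau)\Rightarrow\phi(x,u)<\tau$, while $\phi(x,u)\ge\tau\Rightarrow u\ge\phi^{-1}(x,\tau)$. The guiding principle is never to apply $\phi(x,\cdot)$ or $\phi^{-1}(x,\cdot)$ to both sides of an inequality — that is precisely the step which is harmless in the easy case of a strictly increasing bijection but fails for a general weak $\Phi$-function — but instead to keep \emph{strict} inequalities, which the infimum respects exactly, and then pass to a limit, paying at most a harmless extra factor in $\beta$. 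In both directions I will also use that, since the conditions hold for a.e.\ $x,y\in\Omega$, I may fix a pair $x,y$ for which the relevant inequality additionally holds with $x$ and $y$ interchanged (apply the a.e.\ statement to the pair $(y,x)$).

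For the implication \eqref{eq:phi-A2} $\Rightarrow$ \atwo{} I would fix $\sigma>0$, take $\beta_0,h_0$ from \eqref{eq:phi-A2} for this $\sigma$, let $\tau\in[0,\sigma]$, and consider an arbitrary $t<\phi^{-1}(x,\tau)$. Then $\phi(x,t)<\tau\le\sigma$, so the interchanged form of \eqref{eq:phi-A2} gives $\phi(y,\beta_0 t)\le\phi(x,t)+h_0(x)+h_0(y)<\tau+h_0(x)+h_0(y)$; consequently $\beta_0 t\le\phi^{-1}(y,\tau+h_0(x)+h_0(y))$, since otherwise there would be $u_0<\beta_0 t$ with $\phi(y,u_0)\ge\tau+h_0(x)+h_0(y)$, forcing $\phi(y,\beta_0 t)\ge\tau+h_0(x)+h_0(y)$. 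Letting $t\uparrow\phi^{-1}(x,\tau)$ then yields \atwo{} with $\beta=\beta_0$ and $h=h_0$.

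For the converse \atwo{} $\Rightarrow$ \eqref{eq:phi-A2} I would fix $\sigma>0$ and apply \atwo{} \emph{with parameter $2\sigma$} to obtain $\beta_0,h_0$. Given $t>0$ with $\phi(y,t)\le\sigma$ (the case $t=0$ is trivial, as $\phi(x,0)=0$), put $\tau:=\phi(y,t)\le\sigma$. For every $r\in(\tau,2\sigma]$ one has $\phi^{-1}(y,r)\ge t$, because $\phi(y,u)\le\tau<r$ for all $u\le t$; hence the interchanged form of \atwo{} gives $\phi^{-1}(x,r+h_0(x)+h_0(y))\ge\beta_0\phi^{-1}(y,r)\ge\beta_0 t$. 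Since $\tau<2\sigma$ and $\phi^{-1}(x,\cdot)$ is non-decreasing, this propagates to $\phi^{-1}(x,w)\ge\beta_0 t$ for every $w>\tau+h_0(x)+h_0(y)$. Therefore, for $u<\beta_0 t$ and any such $w$, the first elementary fact gives $\phi(x,u)<w$ (as $u<\beta_0 t\le\phi^{-1}(x,w)$); letting $w\downarrow\tau+h_0(x)+h_0(y)$ yields $\phi(x,u)\le\phi(y,t)+h_0(x)+h_0(y)$ for every $u<\beta_0 t$, and taking $u=\tfrac12\beta_0 t$ gives \eqref{eq:phi-A2} with $\beta=\tfrac12\beta_0$ and $h=h_0$.

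I expect the main obstacle to be conceptual rather than computational: a weak $\Phi$-function is in general neither injective nor continuous, so $\phi(x,\phi^{-1}(x,\tau))=\tau$ may fail, and an argument that naively applies $\phi$ or $\phi^{-1}$ to both sides collapses. The remedy is exactly the use of the strict inequalities $t<\phi^{-1}(x,\tau)$ and $r>\tau$ together with the subsequent one-sided limits, which cost nothing beyond the factor $2$ in $\beta$ in the second implication. Beyond that, the only care needed is to run \atwo{} at the level $2\sigma$ (so that the interval $(\tau+h_0(x)+h_0(y),\,2\sigma+h_0(x)+h_0(y)]$ of admissible $w$ is non-empty) and a small amount of a.e.\ bookkeeping.
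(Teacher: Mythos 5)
Your proof is correct, but it takes a genuinely different route from the paper's. The paper first shows that \eqref{eq:phi-A2} is invariant under equivalence of $\Phi$-functions, invokes the invariance of \atwo{} (Lemma~\ref{lem:A2-invariant}) and the fact that every $\phi\in\Phiw(\Omega)$ is equivalent to some $\psi\in\Phis(\Omega)$, and then proves the equivalence only for strong $\Phi$-functions, where the exact identities $\phi(x,\phi^{-1}(x,\tau))=\tau$ and $\phi^{-1}(x,\phi(x,t))=t$ (for $\phi(x,t)\in(0,\infty)$) are available; the degenerate cases $\tau=0$ and $\phi(x,\beta t)=0$ are then handled separately. You instead work directly with a general weak $\Phi$-function, replacing those identities by the two one-sided implications that the infimum definition of the left-inverse does satisfy, and recovering the sharp statements by strict inequalities and one-sided limits (running \atwo{} at level $2\sigma$ so the auxiliary interval of levels $r$ is nonempty, and paying a factor $\tfrac12$ in $\beta$ in one direction). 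Your version is more self-contained --- it does not need the equivalence-with-strong-$\Phi$-functions theorem nor the invariance of \eqref{eq:phi-A2} --- while the paper's is shorter given the book's machinery; your handling of the a.e.\ interchange of $x$ and $y$ is also the right bookkeeping, since neither condition is symmetric as written. The only step left implicit is the trivial case $\phi^{-1}(x,\tau)=0$ in the first implication (there is then no $t<\phi^{-1}(x,\tau)$ to pass to the limit with, but the \atwo{}-inequality holds vacuously); this is worth a half-sentence but is not a gap.
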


\begin{proof}
We first show that \eqref{eq:phi-A2} is invariant under equivalence of 
$\Phi$-functions. Let 
$\phi \simeq \psi \in \Phiw(\Omega)$ with constant $L\ge 1$. By \eqref{eq:phi-A2} of $\phi$,  
\[
\psi(x, \tfrac \beta L t) 
\le \phi(x,\beta t) 
\le \phi(y,t) + h(x) + h(y)
\le \psi(y,Lt) + h(x) + h(y),
\] 
for $\phi(y,t) \in [0,\sigma]$. Denote $t':=Lt$. If $\psi(y,t') \in [0,\sigma]$, then 
$\phi(y,t) \in [0,\sigma]$ and the previous inequality gives 
$\psi(x, \tfrac \beta {L^2} t') \le \psi(y,t') + h(x) + h(y)$, which is \eqref{eq:phi-A2} for $\psi$. 
By Lemma~\ref{lem:A2-invariant}, \atwo{} is invariant under equivalence of weak $\Phi$-functions. 
By Theorem~2.5.10 of \cite{HarH_book}, every weak $\Phi$-function is equivalent to a strong 
$\Phi$-function. Hence it suffices to show the claim for $\phi\in\Phis(\Omega)$, so the 
inverse is better behaved since $\phi$ is a surjection onto $[0, \infty)$. Specifically, 
$\phi(x,\phi^{-1}(x,\tau))=\tau$ for every $\tau \ge 0$ and $\phi^{-1}(x,\phi(x,t))=t$ 
provided $\phi(x,t)\in (0,\infty)$ \cite[Lemma~2.3.3 and Corollary~2.3.4]{HarH_book}. 



Assume first that \atwo{} holds and $\phi(y,t)\in [0, \sigma]$. Let $\tau:=\phi(y,t)$. 
If $\tau\in (0,\sigma]$, then 
\begin{align*}
\beta \phi^{-1}(y,\tau) \le \phi^{-1}(x, \tau+h(x)+h(y))
&\quad\Leftrightarrow\quad
\beta t \le \phi^{-1}\big(x, \phi(y,t)+h(x)+h(y)\big) \\
&\quad\Rightarrow\quad
\phi(x, \beta t) \le \phi\big(x,\phi^{-1}\big(x, \phi(y,t)+h(x)+h(y)\big)\big)\\
&\quad\Leftrightarrow\quad
\phi(x, \beta t) \le \phi(y,t)+h(x)+h(y)
\end{align*}
and so \eqref{eq:phi-A2} holds. 
When $\tau=0$, we denote $t_0:=\max\{s\ge 0 : \phi(y, s)=0\}$, so that $t\in [0, t_0]$. 
Let $t_i\searrow t_0$. 
Then the previous argument applies to $\tau_i:=\phi(y, t_i)\in (0,\sigma]$ and yields
\[
\phi(x, \beta t) 
\le 
\phi(x, \beta t_i) 
\le \phi(y,t_i)+h(x)+h(y)
\to
h(x)+h(y),
\]
where we used that $\phi$ is continuous. Also in this case \eqref{eq:phi-A2} holds.

Assume conversely that \eqref{eq:phi-A2} holds and $\tau\in [0,\sigma]$. Let $t:=\phi^{-1}(y,\tau)$ so that 
$\phi(y,t)=\tau$. 
If $\phi(x,\beta t)>0$, then 
\begin{align*}
\phi(x, \beta t) \le \phi(y,t)+h(x)+h(y)
&\quad\Rightarrow\quad
\phi^{-1}(x,\phi(x, \beta t)) \le \phi^{-1}\big(x, \tau+h(x)+h(y)\big) \\
&\quad\Leftrightarrow\quad
\beta \phi^{-1}(y,\tau) \le \phi^{-1}(x, \tau+h(x)+h(y))
\end{align*}
If $\phi(x,\beta t)=0$ and $\tau = \phi(y,t)>0$, then 
\[
\beta \phi^{-1}(y, \tau) = \beta t
\le 
\max\{s\ge 0 \mid \phi(x, s)=0\}
\le
\phi^{-1}(x, \tau). 
\] 
Finally, if $\tau=0$, then \atwo{} automatically holds since the left-hand side in the 
\atwo{}-inequality equals zero. 
In all cases, \atwo{} follows from \eqref{eq:phi-A2}. 
\end{proof}


\section{Other results}\label{sect:other}

Next we collect conditions that are equivalent with \atwo{}. 
The starting point of this paper was the observation that 
\eqref{eq:old-A2} and \eqref{eq:phi-A2} are not equivalent. However, the following proposition 
shows that this can be remedied by some additional assumptions. 
We define one more auxiliary condition: for every $\sigma>0$, there exist $\beta\in(0,1]$ and $h\in L^1(\Omega) \cap L^\infty (\Omega)$ such that, for a.e.\ $x,y \in \Omega$, 
\begin{equation}\label{eq:max-A2}
\beta \phi^{-1}\big(x,\max\{ \tau,h(x)+h(y) \}\big) \le \phi^{-1}\big(y,\max\{ \tau,h(x)+h(y) \}\big)
\quad\text{when}\quad \tau\in [0, \sigma];
\end{equation}

\begin{prop}\label{prop:equivalent}
Let $\phi \in \Phiw(\Omega)$. The following are equivalent:
\begin{enumerate}
\item $\phi$ satisfies \atwo{}.
\item $\phi$ satisfies \eqref{eq:phi-A2}.
\item $\phi$ satisfies \eqref{eq:max-A2}.
\item $\phi$ satisfies \eqref{eq:old-A2} for $h$ with $\|h\|_\infty\leq\frac{\sigma}{2}$.
\item $\phi$ satisfies \eqref{eq:old-A2} and \azero{}. 
\end{enumerate}
\end{prop}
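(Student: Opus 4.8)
The plan is the following. The equivalence $(1)\Leftrightarrow(2)$ is precisely Lemma~\ref{lem:A2original}, so it remains to connect $(3)$, $(4)$ and $(5)$ to $(1)$. Two of these implications cost nothing beyond the monotonicity of $\tau\mapsto\phi^{-1}(\cdot,\tau)$. For $(3)\Rightarrow(1)$, setting $s:=\max\{\tau,h(x)+h(y)\}$ one has $\tau\le s\le\tau+h(x)+h(y)$ since $h\ge0$, so \eqref{eq:max-A2}, with the same $\beta$ and $h$, gives $\beta\phi^{-1}(x,\tau)\le\beta\phi^{-1}(x,s)\le\phi^{-1}(y,s)\le\phi^{-1}(y,\tau+h(x)+h(y))$. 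For $(4)\Rightarrow(1)$ I would treat $\tau\ge h(x)+h(y)$ directly by \eqref{eq:old-A2} and then enlarge the argument of $\phi^{-1}(y,\cdot)$; for $\tau<h(x)+h(y)$ — where $\|h\|_\infty\le\tfrac\sigma2$ forces $h(x)+h(y)\le\sigma$, so $\tau'=h(x)+h(y)$ is admissible in \eqref{eq:old-A2} — I would apply \eqref{eq:old-A2} at $\tau'$ and use monotonicity on both sides, with $\beta$ and $h$ again unchanged.

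The heart of the argument is $(1)\Rightarrow(5)$, namely that \atwo{} implies both \eqref{eq:old-A2} and \azero{}. Throughout I would use that $\phi^{-1}$ satisfies \adec{1} — since $\phi\in\Phiw(\Omega)$ satisfies \ainc{1} (Proposition~2.3.7 of \cite{HarH_book}) — i.e.\ $\phi^{-1}(x,s)\le a\,\tfrac st\,\phi^{-1}(x,t)$ for $0<t\le s$. For \eqref{eq:old-A2} I would fix $\sigma$, take $\beta,h$ from \atwo{}, and note that for $\tau\in[h(x)+h(y),\sigma]$ one has $\tau+h(x)+h(y)\le2\tau$, so $\beta\phi^{-1}(x,\tau)\le\phi^{-1}(y,\tau+h(x)+h(y))\le2a\,\phi^{-1}(y,\tau)$, which is \eqref{eq:old-A2} with constant $\tfrac\beta{2a}$. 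For \azero{} I would apply \atwo{} with $\sigma=1$ at $\tau=1$ to get $\beta\phi^{-1}(x,1)\le\phi^{-1}(y,1+h(x)+h(y))$, and then bound the right side by $a(1+2\|h\|_\infty)\phi^{-1}(y,1)$; thus $\phi^{-1}(x,1)\le K\phi^{-1}(y,1)$ for a.e.\ $x,y$, with $K:=a(1+2\|h\|_\infty)/\beta$. Since $\phi\in\Phiw(\Omega)$ forces $0<\phi^{-1}(x,1)<\infty$ for a.e.\ $x$, taking the essential infimum over $y$ gives $\phi^{-1}(x,1)\le Km$ with $m:=\essinf_y\phi^{-1}(y,1)$, and taking the essential supremum over $x$ gives $\phi^{-1}(y,1)\ge M/K$ with $M:=\esssup_x\phi^{-1}(x,1)$; both $m$ and $M$ are then finite and positive, so $\phi^{-1}(\cdot,1)$ is trapped between two positive constants, which is \azero{}.

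It remains to obtain $(1)$, $(3)$, $(4)$ from $(5)$; together with $(1)\Leftrightarrow(2)$ this closes the chain of equivalences. Fix $\sigma>0$ and take $\beta,h$ from \eqref{eq:old-A2}; for $(4)$ I would first replace $h$ by $\min\{h,\tfrac\sigma2\}$, which only enlarges the admissible $\tau$-range by values $\tau\ge\tfrac\sigma2$. In each case the relevant argument $s$ of $\phi^{-1}$ satisfies $h(x)+h(y)\le s\le\max\{\sigma,2\|h\|_\infty\}$, and I would split into a \emph{near} regime $h(x)+h(y)\le s\le\sigma$, where \eqref{eq:old-A2} applies at $s$ and the desired inequality follows by monotonicity exactly as in $(3)\Rightarrow(1)$ and $(4)\Rightarrow(1)$, and a \emph{far} regime $\sigma<s\le2\|h\|_\infty$ — a bounded interval bounded away from $0$ — where \azero{} and \adec{1} give $\phi^{-1}(y,s)\ge\phi^{-1}(y,\sigma)\ge c$ and $\phi^{-1}(x,s)\le\phi^{-1}(x,2\|h\|_\infty)\le C$ for positive finite constants $c,C$ depending only on $\sigma$, $\|h\|_\infty$ and the constants of \azero{} and \adec{1}. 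Combining the two regimes yields the required inequality with a uniform (though worse) constant.

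I expect $(1)\Rightarrow\azero{}$ to be the main obstacle: the delicate point is that the pointwise comparison $\phi^{-1}(x,1)\le K\phi^{-1}(y,1)$ becomes a genuine two-sided bound only after passing to the essential infimum and supremum, and one must check that the resulting constants are strictly positive and finite — which is precisely where the normalizations $\phi(x,0^+)=0$ and $\lim_{t\to\infty}\phi(x,t)=\infty$ of a weak $\Phi$-function enter. The remaining work is bookkeeping; the only real subtlety there is keeping $\sigma$ and $h$ compatible across the five formulations, handled by the truncation $h\mapsto\min\{h,\sigma/2\}$ in $(4)$ and by the far-regime estimate elsewhere.
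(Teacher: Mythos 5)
Your proposal is correct; all the individual implications check out, and the chain $(1)\Leftrightarrow(2)$, $(3)\Rightarrow(1)$, $(4)\Rightarrow(1)$, $(1)\Rightarrow(5)$, $(5)\Rightarrow(3),(4)$ closes the equivalence. The toolbox is the same as the paper's --- monotonicity of $\phi^{-1}$, \adec{1} of $\phi^{-1}$ inherited from \ainc{1} of $\phi$, the observation $\tau+h(x)+h(y)\le 2\max\{\tau,h(x)+h(y)\}$, the truncation $h\mapsto\min\{h,\sigma/2\}$, and the essential sup/inf argument for \azero{} --- but your implication graph is arranged differently. The paper proves $(1)\Rightarrow(3)\Rightarrow(4)\Rightarrow(2)$ and $(4)\Leftrightarrow(5)$, leaning on the (repaired) proof of Lemma~4.2.5 from the book for $(4)\Rightarrow(2)$, and extracts \azero{} from $(4)$ by applying \eqref{eq:old-A2} with $\sigma=1$ at $\tau=1$. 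You instead route everything through $(1)$: your $(4)\Rightarrow(1)$ is a self-contained two-case argument (which is in effect the corrected version of the book's broken step, made legitimate by $\|h\|_\infty\le\sigma/2$), and your $(1)\Rightarrow\azero{}$ works directly from the \atwo{}-inequality at $\tau=\sigma=1$ rather than passing through \eqref{eq:old-A2} first; this is arguably cleaner and makes the claim in the paper's introduction (``\atwo{} implies \azero{}'') more transparent. The price is one extra implication, $(5)\Rightarrow(3)$, whose near/far-regime splitting duplicates work that the paper's linear chain $(1)\Rightarrow(3)\Rightarrow(4)$ avoids. Your only loosely worded step is the essential sup/inf argument for \azero{}: to conclude $0<m\le M<\infty$ you should note explicitly that $m<\infty$ and $M>0$ follow from $\phi^{-1}(\cdot,1)\in(0,\infty)$ a.e., after which $M\le Km$ forces both bounds --- the paper makes the same move.
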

\begin{proof}
From Lemma~\ref{lem:A2original} we have equivalence between (1) and (2). 
The implication from (4) to (2) follows 
from the proof of the \cite[Lemma~4.2.5]{HarH_book}, since 
the additional assumption $\|h\|_\infty\leq\frac{\sigma}{2}$ ensures that 
$[h(x)+h(y),\sigma]$ is non-empty for a.e.\ $x,y\in\Omega$. 
We conclude the proof by showing that (1) $\Rightarrow$ (3) $\Rightarrow$ (4) and (4) $\Leftrightarrow$ (5).

First we show that (1) implies (3).
Let $\tilde\tau := \max\{\tau,h(x)+h(y)\}$. 
Since $\phi$ satisfies \ainc{1}, $\phi^{-1}$ satisfies \adec{1} by Proposition 2.3.7 of \cite{HarH_book}. 
Hence
\begin{equation*}
\phi^{-1}(y,\tau+h(x)+h(y)) 
\le \phi^{-1}(y,2 \tilde\tau) 
\le 2a \phi^{-1}(y,\tilde\tau) 
\le 2a \phi^{-1}(y,\tau+h(x)+h(y)).
\end{equation*}
Thus \atwo{} is equivalent with
\[
\beta \phi^{-1}(x,\tau) 
\le 
\phi^{-1}(y,\tilde\tau)
\]
for almost every $x,y\in \Omega$ and every $\tau\in [0, \sigma]$.
To obtain (3), we need to also show that $\beta \phi^{-1}(x,h(x)+h(y))\le \phi^{-1}(y,\tilde\tau)$ 
with possibly different $\beta>0$.
If $h(x)+h(y)\in (0,\sigma]$, then we can use the previous inequality with $\tau=h(x)+h(y)$ and obtain
\[
\beta \phi^{-1}(x,h(x)+h(y)) 
\le \phi^{-1}(y,h(x)+h(y)) 
\le
\phi^{-1}(y,\tilde\tau).
\]
If $h(x)+h(y)\in (\sigma,\infty)$, then we use 
\adec{1} for $\phi^{-1}$, 
and  \atwo{} with $\tau=\sigma$ to conclude
\[
\beta \phi^{-1}(x,h(x)+h(y)) 
\le\tfrac{a }{\sigma} (h(x)+ h(y)) \, \beta\phi^{-1}(x,\sigma)
\le\tfrac{2a}{\sigma} \|h\|_\infty\,\phi^{-1}(y,\sigma). 
\]
Since $\sigma\le h(x)+h(y)\le\tilde\tau$, (3) follows. 

Next we show that (3) implies (4). For $\sigma>0$, let 
$\beta\in (0,1]$ and $h\in L^1(\Omega) \cap L^\infty (\Omega)$, $h\ge 0$, be from \eqref{eq:max-A2}. 
If $\|h\|_\infty\leq \frac{\sigma}{2}$, there is nothing to prove, so we consider $\|h\|_\infty> \frac{\sigma}{2}$. 
We denote $\tilde{h}:= \tfrac{\sigma}{2 \|h\|_\infty}h$ and show that \eqref{eq:old-A2} holds with 
function $\tilde h$ and some constant $\tilde\beta$, which establishes (4). 
Let $\tau\in [\tilde{h}(x) + \tilde{h}(y), \sigma]$ 
so that $h(x)+ h(y) \le \tfrac{2 \|h\|_\infty}{\sigma}\tau$.
If $\tau\ge h(x) + h(y)$, then (4) follows from \eqref{eq:max-A2}. 
When $\tau< h(x) + h(y)$, we use that $\phi^{-1}$ is increasing, the inequality \eqref{eq:max-A2} 
and \adec{1} of $\phi^{-1}$ to obtain that 
\begin{align*}
\beta \phi^{-1}(x,\tau) 
\le \beta \phi^{-1}(x,h(x)+h(y)) 
\le \phi^{-1}(y,h(x)+ h(y)) 
\le \frac{2a \|h\|_\infty}{\sigma}\phi^{-1}(y,\tau). 
\end{align*}
Thus (4) holds with $\tilde\beta:=\frac{\beta\sigma}{2a\|h\|_\infty}$. 

It remains to show that (4) and (5) are equivalent. 
Assume first that (4) holds. 
Then \eqref{eq:old-A2} holds so we need only establish \azero{}. 
We use \eqref{eq:old-A2} with $\sigma=1$ and $\|h\|_{\infty}\le\frac{1}{2}$ for $\tau=1$ 
and find that 
\[
\beta \phi^{-1}(x,1) 
\le 
\phi^{-1}(y,1)
\]
for a.e.\ $x,y\in \Omega$. Thus
\begin{equation*}
\beta\esssup_{y\in \Omega} \phi^{-1}(y,1)\le\phi^{-1}(x,1) \le \frac{1}{\beta} \essinf_{y\in \Omega} \phi^{-1}(y,1)
\end{equation*}
 for a.e $x\in \Omega$. Since $\phi^{-1}(x,1) \in (0,\infty)$, this implies that
\begin{equation*}
0<\essinf_{y\in \Omega} \phi^{-1}(y,1) \le\esssup_{y\in \Omega} \phi^{-1}(y,1)<\infty.
\end{equation*}
Hence, \azero{} holds with constant
\begin{equation*}
\tilde{\beta} = \beta \min\Big\{\esssup_{y\in \Omega} \phi^{-1}(y,1), \big(\essinf_{y\in \Omega} \phi^{-1}(y,1)\big)^{-1}\Big\}.
\end{equation*}

Assume finally that (5) holds, let $\beta$ and $h$ be from \eqref{eq:old-A2} and define 
$\tilde h:=\min\{h, \frac \sigma 2\}$. We show that \eqref{eq:old-A2} holds with function $\tilde h$ (and 
different $\beta$), which gives (4). 
If $h(x), h(y)\le \frac \sigma 2$, then $\tilde h(x)=h(x)$ and $\tilde h(y)=h(y)$ and there is nothing to show. 
Otherwise $h(x)+h(y)\ge \frac \sigma 2$, so it suffices to prove the inequality from \eqref{eq:old-A2} 
for all $\tau\in [\frac \sigma 2, \sigma]$. For this we use \adec{1} and \azero{}:
\[
\phi^{-1}(x,\sigma) 
\le 
\max\{1, a \sigma\} \phi^{-1}(x,1) 
\le
\frac {\max\{1, a\sigma\}}{\beta^{2}} \phi^{-1}(y,1)
\le
\frac{\max\{1, a\sigma\} \max\{1, \tfrac {2a}\sigma\}}{\beta^{2} } \phi^{-1}(y,\tfrac \sigma2).
\]
Thus \eqref{eq:old-A2} holds with function $\tilde h$ and constant 
$\tilde\beta:=\frac{\beta^{2} }{\max\{1, a \sigma\} \max\{1, 2a/\sigma\}}$.
\end{proof}

In Lemma~\ref{lem:A2-bounded-set} we showed that \azero{} implies \atwo{} in 
bounded sets and in the previous proposition we saw that \atwo{} implies \azero{} in any set. 
The next result shows that \aone{} implies both of them.

\begin{prop}\label{prop:A2-bounded-set}
Let $\Omega \subset \Rn$ be bounded.
If $\phi \in \Phiw (\Omega)$ satisfies \aone{}, then it satisfies \azero{} and \atwo{}.
\end{prop}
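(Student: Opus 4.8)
The plan is to first derive \azero{} from \aone{} using the boundedness of $\Omega$, and then to obtain \atwo{} directly from Lemma~\ref{lem:A2-bounded-set}. Recall that for $\phi\in\Phiw(\Omega)$ one has $\phi^{-1}(x,1)\in(0,\infty)$ for a.e.\ $x\in\Omega$, since $\lim_{t\to 0^+}\phi(x,t)=0$ and $\lim_{t\to\infty}\phi(x,t)=\infty$; so proving \azero{} amounts to bounding the a.e.\ finite and positive measurable function $\phi^{-1}(\cdot,1)$ above and below by positive constants, and --- unlike a ``chaining'' argument across overlapping balls --- this requires no connectedness of $\Omega$.

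First I would fix a radius $r>0$ so small that $|B(z,r)|\le 1$ for every $z\in\Rn$ (for instance $r=\omega_n^{-1/n}$ with $\omega_n=|B(0,1)|$), which makes $\tau=1$ an admissible value in \aone{} for every ball of radius $r$. Using the compactness of $\overline\Omega$, I would cover it by finitely many balls $B_i:=B(z_i,r)$, $i=1,\dots,N$, and discard those with $|\Omega\cap B_i|=0$, so that the remaining sets $\Omega\cap B_i$ have positive measure and still cover $\Omega$ up to a null set. On each such $B_i$, applying \aone{} with $B=B_i$ and $\tau=1$, and also with the roles of $x$ and $y$ interchanged, yields
\[
\beta\,\phi^{-1}(x,1)\le\phi^{-1}(y,1)\le\tfrac1\beta\,\phi^{-1}(x,1)
\qquad\text{for a.e.\ }x,y\in\Omega\cap B_i,
\]
where $\beta\in(0,1]$ is the (ball-independent) constant from \aone{}. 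Fixing a point $x_i\in\Omega\cap B_i$ at which this holds for a.e.\ $y$ and setting $c_i:=\phi^{-1}(x_i,1)\in(0,\infty)$, I obtain $\beta c_i\le\phi^{-1}(y,1)\le\tfrac1\beta c_i$ for a.e.\ $y\in\Omega\cap B_i$.

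With $m:=\beta\min_{1\le i\le N}c_i>0$ and $M:=\tfrac1\beta\max_{1\le i\le N}c_i<\infty$, these bounds combine over the finite cover to give $m\le\phi^{-1}(x,1)\le M$ for a.e.\ $x\in\Omega$. Since $m\le M$, the number $\tilde\beta:=\min\{m,1/M\}$ lies in $(0,1]$, and then $\tilde\beta\le\phi^{-1}(x,1)\le\tfrac1{\tilde\beta}$ a.e., which is \azero{}. Feeding \azero{} together with the boundedness of $\Omega$ into Lemma~\ref{lem:A2-bounded-set} then yields \atwo{}, completing the argument.

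I do not expect a serious obstacle here; the points that need care are the reading of the ``for a.e.\ $x,y$'' quantifier in \aone{} as ``for a.e.\ $x$, for a.e.\ $y$'' (via Fubini), which is what legitimizes fixing the base points $x_i$, and remembering to discard covering balls that meet $\Omega$ in a null set before extracting the constants $c_i$.
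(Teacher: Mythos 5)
Your proposal is correct and follows essentially the same route as the paper: establish \azero{} by covering the bounded set $\Omega$ with finitely many balls small enough that $\tau=1$ is admissible in \aone{}, fix a base point in each ball to pin down $\phi^{-1}(\cdot,1)$ up to the factor $\beta$, combine over the finite cover, and then invoke Lemma~\ref{lem:A2-bounded-set} to get \atwo{}. Your extra care about discarding null-intersection balls and reading the a.e.\ quantifier is a harmless refinement of the same argument.
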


\begin{proof}
Let $\phi$ satisfy \aone{}. 
By Lemma~\ref{lem:A2-bounded-set}, it suffices to show \azero{}. 
Since $\lim_{t \to 0^+} \phi(x, t) =0$ (by $\phi\in\Phiw(\Omega)$),
we obtain from the definition of inverse that $\phi^{-1}(x, 1) >0$ for a.e.\ $x \in \Omega$. 
Since $\Omega$ is precompact, we can cover it by
finitely many balls $B_1, \ldots, B_m$ of measure equal to $1$.
In each ball we choose $x_i \in \Omega \cap B_i$ outside the exceptional set so \aone{} implies that 
$\beta \phi^{-1}(x_i, 1) \le \phi^{-1}(y, 1) \le \frac1\beta\phi^{-1}(x_i, 1)$ for almost every $y\in B_i \cap \Omega$.
Since there are only finitely many balls, we obtain that
\[
0<\beta \min_{i\in \{1, \ldots, m\}} \phi^{-1}(x_i, 1) 
\le \phi^{-1}(y, 1) 
\le 
\frac1\beta \max_{i\in \{1, \ldots, m\}} \phi^{-1}(x_i, 1)<\infty
\]
for almost every $y \in \Omega$, and hence \azero{} holds.
\end{proof}

The previous result was inspired by Kami\'nska and \.{Z}yluk's recent study \cite{KamZ_pp23} 
of density of smooth functions in Sobolev spaces (see also \cite{BorC22}), 
where they show that \azero{} and \atwo{} are not needed for density, 
in contrast to earlier results like Theorem~6.4.4 from \cite{HarH_book}. 
Using Proposition~\ref{prop:A2-bounded-set}, we can easily 
obtain the theorem without assuming \azero{} and \atwo{}. 
Similar modifications can be done for the other results in \cite[Section~6.4]{HarH_book} 
and other results which are local in character.

\begin{prop}\label{prop:density}
Let $\phi\in\Phiw(\Rn)$ satisfy \aone{} and \adec{q} for some $q>1$. Then 
$C^\infty_0(\Rn)$ is dense in $W^{1,\phi}(\Rn)$. 
\end{prop}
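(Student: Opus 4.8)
The key idea is that density of smooth functions is a local property: by a partition of unity argument, it suffices to work on bounded pieces, and on bounded domains we have just shown in Proposition~\ref{prop:A2-bounded-set} that \aone{} upgrades to \azero{} and \atwo{}. I would therefore reduce Proposition~\ref{prop:density} to the existing density result in \cite{HarH_book} (Theorem~6.4.4 or its variants in \cite[Section~6.4]{HarH_book}), which assumes \azero{}, \aone{}, \atwo{}, and \adec{q}, applied on balls rather than on all of $\Rn$.

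First I would take $u\in W^{1,\phi}(\Rn)$ and fix a smooth partition of unity $\{\psi_k\}$ subordinate to a locally finite cover of $\Rn$ by balls $\{B_k\}$ of radius $1$, say, with $\sum_k\psi_k\equiv 1$ and $\|\nabla\psi_k\|_\infty$ bounded uniformly in $k$. Writing $u=\sum_k u\psi_k$, each $u\psi_k\in W^{1,\phi}(B_k')$ where $B_k'\supset B_k$ is a slightly larger ball, is compactly supported inside $B_k'$, and the product rule plus \adec{q} (which gives the unit-ball modular bound needed to control $u\nabla\psi_k$) shows $u\psi_k\in W^{1,\phi}$ with the appropriate norm estimate. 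On the bounded domain $B_k'$, Proposition~\ref{prop:A2-bounded-set} gives that $\phi$ satisfies \azero{} and \atwo{} there (it already satisfies \aone{} and \adec{q}), so the book's density theorem applies: there is $v_k\in C_0^\infty(B_k')$ with $\|u\psi_k-v_k\|_{W^{1,\phi}(B_k')}$ as small as we like, say $<\epsilon\, 2^{-k}$. Then $v:=\sum_k v_k$ is locally a finite sum of smooth functions, hence smooth, and the local finiteness of the cover together with the triangle inequality (in the Luxemburg norm, using that $\sum_k 2^{-k}<\infty$) gives $\|u-v\|_{W^{1,\phi}(\Rn)}<C\epsilon$. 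Since each $v_k$ has compact support in $B_k'$ and only finitely many $B_k'$ meet any compact set, $v$ need not itself have compact support, but one final truncation $v\mapsto \eta_R v$ with $\eta_R$ a smooth cutoff equal to $1$ on $B(0,R)$ — again invoking \adec{q} to handle the term $v\nabla\eta_R$ and dominated convergence for the modular — produces the desired $C_0^\infty(\Rn)$ approximant.

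The main obstacle, and the one point requiring genuine care rather than bookkeeping, is verifying that the multiplication operators $u\mapsto u\psi_k$ and the truncation $v\mapsto\eta_R v$ are bounded on $W^{1,\phi}$ and behave continuously; this is exactly where \adec{q} enters, since without a growth bound on $\phi$ the product of an $L^\phi$ function with a bounded function need not lie in $L^\phi$, and one must control $\phi(x, |u||\nabla\psi_k|)$ by a constant multiple of $\phi(x,|u|)$ plus an integrable error using \adec{q} together with the fact that $|\nabla\psi_k|$ is bounded. A secondary subtlety is that the book's density theorem is stated for balls (or John domains) with the full set of assumptions; I would either cite the precise statement that only needs \azero{}, \aone{}, \atwo{}, \adec{q} on such a domain, or remark — as the paragraph preceding the proposition already does — that the same localization replaces \azero{} and \atwo{} by \aone{} throughout \cite[Section~6.4]{HarH_book}. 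Everything else is routine: smoothness of a locally finite sum, the triangle inequality for the norm, and dominated convergence for the final cutoff.
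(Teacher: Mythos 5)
Your argument is correct, but it takes a genuinely different (and longer) route than the paper. The paper's proof first invokes \cite[Lemma~6.4.1]{HarH_book} to reduce to a compactly supported $u$ (this is where \adec{q} is consumed, exactly as in your cutoff step), then observes that the set $\Omega=\{x : \dist(x,\supp u)<1\}$ is a \emph{single} bounded domain on which Proposition~\ref{prop:A2-bounded-set} upgrades \aone{} to \azero{} and \atwo{}, so that one mollification $u*\sigma_\epsilon$ converges in $W^{1,\phi}$ by \cite[Theorem~4.4.7]{HarH_book}. You instead perform the localization first (partition of unity, countably many balls, absolute convergence of $\sum_k\|u\psi_k-v_k\|$) and the compact-support reduction last (the cutoff $\eta_R$); since the final truncation alone already reduces the problem to one bounded set, the partition of unity is redundant work, and reversing the order of your two steps essentially collapses your proof into the paper's. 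Two points in your write-up deserve care: (i) the ``book's density theorem'' on $B_k'$ is not literally \cite[Theorem~6.4.4]{HarH_book}, which is stated on $\Rn$; what you actually need (and what suffices, since $u\psi_k$ is compactly supported in $B_k'$) is convergence of mollifications on a bounded domain, i.e.\ \cite[Theorem~4.4.7]{HarH_book} under \azero{}, \aone{}, \atwo{} supplied by Proposition~\ref{prop:A2-bounded-set}; (ii) your cutoff step silently reproves \cite[Lemma~6.4.1]{HarH_book}. On the other hand, your localization scheme is more robust: it is the natural template for the ``other results which are local in character'' mentioned before the proposition, e.g.\ on unbounded domains other than $\Rn$ where a single bounded neighborhood of the support is not available.
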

\begin{proof}
Let $u \in W^{1,\phi}(\Rn)$ and $\epsilon \in (0,1)$.
By \cite[Lemma~6.4.1]{HarH_book}, we may assume that
$u$ has compact support in $\Rn$. Denote $\Omega:= \{ x\in \Rn \mid \dist(x, \supp u)<1\}$. 
Then $\Omega$ is bounded and by Proposition~\ref{prop:A2-bounded-set}, 
$\phi$ satisfies \azero{}, \aone{} and \atwo{} in $\Omega$. 
Let $\sigma_\epsilon$ be a standard mollifier. Then $u*\sigma_\epsilon$ belongs to
$C^\infty_0(\Omega)$ and 
\begin{align*}
\nabla(u * \sigma_\epsilon) - \nabla u = (\nabla u) * \sigma_ 
\epsilon - \nabla u.
\end{align*}
Thus the claim follows since $\|f * \sigma_\epsilon - f\|_{L^{\phi}(\Omega)} \to 0$ as $\epsilon \to 0^+$
for every $f \in L^{\phi}(\Omega)$ by \cite[Theorem~4.4.7]{HarH_book} and since the norm
$ \|u\|_{L^{\phi}(\Omega)} + \|\nabla u\|_{L^{\phi}(\Omega)}$ is equivalent with $\|u\|_{W^{1, \phi}(\Omega)}$ by \cite[Lemma~6.1.5]{HarH_book} (see \cite[Definitions~3.2.1 and 6.1.2]{HarH_book} for the definition of the norms).
\end{proof}

\end{document}